\newtheorem{theorem}[subsection]{Theorem}
\newtheorem{lemma}[subsection]{Lemma}
\newtheorem{definition}[subsection]{Definition}
\numberwithin{equation}{section}
\newcommand{\intz}{\mathbb{Z}}
\newcommand{\natn}{\mathbb{N}}
\newcommand{\rear}{\mathbb{R}}
\newcommand{\norm}[1]{\left\lvert#1\right\rvert}
\title{Solving $p$-adic polynomial equations using Jarratt's Method}
\author{
Stephan Baier,
Swarup Kumar Das \and 
Saayan Mukherjee 
}
\address{Stephan Baier,
Ramakrishna Mission Vivekananda Educational and Research Institute, Department of Mathematics, G. T. Road, PO Belur Math, Howrah, West Bengal 711202, India}
\email{stephanbaier2017@gmail.com}
\address{Swarup Kumar Das,
Ramakrishna Mission Vivekananda Educational and Research Institute, Department of Mathematics, G. T. Road, PO Belur Math, Howrah, West Bengal 711202, India}
\email{313swarup@gmail.com}
\address{Saayan Mukherjee,
Ramakrishna Mission Vivekananda Educational and Research Institute, Department of Mathematics, G. T. Road, PO Belur Math, Howrah, West Bengal 711202, India}
\email{saayanwith@gmail.com}
\subjclass[2020]{11D88,11C08,65H04} 
\keywords{Jarratt's method, p-adic polynomials, Thurston's Method, Hensel's lemma}
\begin{document}
\maketitle
\begin{abstract}
We implement an iterative numerical method to solve polynomial equations $f(x)=0$ in the $p$-adic numbers, where $f(x) \in\mathbb{Z}_p[x]$. This method is a simplified $p$-adic analogue of Jarratt's method for finding roots of functions over the real numbers. We establish that our method has a higher order of convergence than J.F.T. Rabago's \cite{j.rabago} $p$-adic version of Olver's method from 2016. Moreover, we weaken the initial conditions in Rabago's method, which allows us to start the iteration with a multiple root of the congruence $f(x) \equiv 0 \bmod{p}$.
\end{abstract}
\tableofcontents
\section{Introduction}
Solving polynomial equations is one of the oldest problems in mathematics which dates back to the Babylonians. There are several analytic, algebraic and geometric approaches for finding roots of polynomials. In this paper we discuss an iterative method to approximate solutions of polynomial equations of the form $f(x)=0$ in the $p$-adic numbers, where $f\in \mathbb{Z}_p[x]$. 

Well-known numerical methods to approximate roots of functions on the real numbers are Newton's Method, Olver's Method \cite{olver-method}, Halley's method \cite{halley-method} and others. In this paper, we look at a method developed by P. Jarratt in 1966 \cite{jarratt}. This method has a higher order of convergence than all of the afore-mentioned methods. We set up a $p$-adic analogue of Jarratt's method. Analogues of Olver's and Halley's method in the $p$-adic setting have previously been established by J.F.T. Rabago \cite{j.rabago}. Apart from improving the order of convergence, we also weaken the initial conditions in Rabago's work. Below is a definition of the term ``order of convergence'' for general Banach spaces, including the complete valued fields $\mathbb{R}$ and $\mathbb{Q}_p$.

\begin{definition}
Let $V$ be a Banach space with norm $|.|$. Assume that $(x_n)$ is a sequence in $V$ converging to $\gamma\in V$. Then we say that $(x_n)$ converges with order $q$ if
\begin{equation*}
 R_n=\frac{\norm{x_{n+1}-\gamma}}{\norm{x_n - \gamma}^q} 
\end{equation*}
is a bounded sequence in $\mathbb{R}$.
\end{definition} 

Before turning to Jarret's method, let us briefly review Newton's and Olver's methods in the context of $p$-adic numbers. Suppose $p$ is a prime and let $f(x) \in \mathbb{Z}_p[x]$ be a polynomial. Throughout the sequel, we assume the leading coefficient of $f$ to be a unit in $\mathbb{Z}_p$. Then all solutions of $f$ in $\mathbb{Q}_p$ lie in $\mathbb{Z}_p$. Assume that $x_1 \in \mathbb{Z}_p$ such that $f(x_1) \equiv 0 \bmod{p}$ but $f'(x_1) \not\equiv 0 \bmod{p}$. Newton's method uses the iteration  
\begin{equation}\label{newton}
    x_{n+1} = x_n - \frac{f(x_n)}{f'(x_n)} \quad \mbox{for } n\ge 1,
\end{equation}
which yields a sequence in $\mathbb{Z}_p$ which converges to a simple root of $f$ in $\mathbb{Z}_p$. In the case when $f\in \mathbb{Z}[x]$, this provides a convenient proof of Hensel's lemma which, for all $m\in \mathbb{N}$, asserts that a solution $x_1\in \mathbb{Z}$ of the congruence $f(x_1)\equiv 0 \bmod{p}$ satisfying $f'(x_1)\not\equiv 0 \bmod{p}$ can be uniquely lifted to a root $x$ of the congruence $f(x)\equiv 0 \bmod{p^m}$. In fact, one may replace the condition $f'(x_1)\not\equiv 0 \bmod{p}$ by the weaker condition 
\begin{equation} \label{condi}
|f(x_1)|_p<|f'(x_1)|_p^2
\end{equation} 
which leads to a generalized version of Hensel's lemma (see \cite{p-adic_book}). The order of convergence of Newton's method is 2.

Olver's method, an iteration with higher order of convergence, was carried over to $p$-adic numbers by J.F.T. Rabago in 2016 \cite{j.rabago}. He proved that if $f(x) \in \mathbb{Z}_p[x]$ and if $x_1 \in \mathbb{Z}$ satisfies $f(x_1) \equiv 0 \bmod{p}$ and $f'(x_1) \not\equiv 0 \bmod{p}$, then the sequence $(x_n)$ defined iteratively as
\begin{equation}
    x_{n+1} = x_n - \frac{f(x_n)}{f'(x_n)} - \frac{1}{2}\cdot\frac{f(x_n)^2f''(x_n)}{f'(x_n)^3}
\end{equation}
converges to a simple root of $f$ in $\mathbb{Z}_p$. The order of convergence of this method is 3 for $p\geq3$ and 4 for $p=2$. 

Our goal is to implement a $p$-adic analogue of a simplified version of Jarratt's method which has order of convergence 4. We will also weaken the initial condition $f'(x_1)\not\equiv 0\bmod{p}$, replacing it by the condition \eqref{condi} above. This allows us to begin with an $x_1$ which possibly is a multiple root of the congruence $f(x) \equiv 0 \bmod{p}$. At the end of this article we will briefly discuss Thurston's method which is useful when the initial condition \eqref{condi} is not satisfied. In a nutshell, Thurston's method either tells us after finitely many steps that a Hensel lifting of a given root $x_1$ of the congruence $f(x)\equiv 0 \bmod{p}$ to a solution of $f(x)=0$ in $\mathbb{Z}_p$ does not exist, or it produces a new polynomial $F$ in place of $f$ which meets the above-mentioned initial conditions. In this case we can perform Jarratt's method for $F$, which in turn gives us an approximation to a root of $f$ in $\mathbb{Z}_p$.    
\section{Simplified Jarratt Method for functions on $\rear$}
Jarratt's original method is an iterative numerical method for solving equations $f(x)=0$, where $f$ is a real-valued function defined on intervals in $\rear$. The iteration is defined as 
\begin{equation}\label{JM}
    x_{n+1} = x_n - \frac{1}{2}\frac{f(x_n)}{f'(x_n)} +\frac{f(x_n)}{f'(x_n)-3f'\left( x_n - \frac{2}{3} \frac{f(x_n)}{f'(x_n)}\right)}, \hspace{1cm} \forall n\in\natn.
\end{equation}
Precise initial conditions on $x_1$ for a generalized version of this method on Banach spaces were worked out in \cite{initial-sjm}.
The order of convergence of this method is 4, as established in \cite{jarratt}.

Instead of using the original method, we here use a simplified version. 
The idea is as follows. Using a Taylor approximation (if existent), we find
\begin{equation*}\label{approx}
    f'\left( x_n - \frac{2}{3} \frac{f(x_n)}{f'(x_n)}\right) = f'(x_n) - \frac{2}{3}\cdot\frac{f(x_n)}{f'(x_n)}\cdot f''(x_n) + \frac{2}{9} \left(\frac{f(x_n)}{f'(x_n)}\right)^2f'''(x_n) + \mbox{Error}.
\end{equation*}
Plugging the right-hand side above into \eqref{JM} and neglecting the error, we arrive at the simpler iteration
\begin{equation}\label{SJM}
    x_{n+1} = x_n - \frac{1}{2}\frac{f(x_n)}{f'(x_n)} + \frac{3f(x_n)f'(x_n)^2}{-6f'(x_n)^3 + 6f(x_n)f'(x_n)f''(x_n) - 2f(x_n)^2f'''(x_n)},
\end{equation}
We will refer to this iteration as \textit{Simplified Jarratt Method} (SJM) in the rest of this article. Below we will prove that SJM has order of convergence 4 over the real numbers. If $p>3$, this result carries over to $\mathbb{Q}_p$, as we will see later. 

\begin{theorem}\label{thm:sjm-order}
Assume $I$ is an open interval in $\mathbb{R}$ and $f:I \rightarrow \rear$ is a five times differentiable function. Assume that $\gamma \in I$ satisfies $f(\gamma)=0$ and $f'(\gamma)\not=0$. Then there exists $\delta>0$ such that if $x_1\in (\gamma-\delta,\gamma+\delta)$, the sequence $(x_n)$ defined by \eqref{SJM} converges to $\gamma$ with order $4$. Moreover,
\begin{equation} \label{rateofconvergence}
\lim_{n \rightarrow \infty} \frac{\norm{x_{n+1}-\gamma}}{\norm{x_n-\gamma}^4} = \left| \frac{18f'''(\gamma)f''(\gamma)f'(\gamma)-18f''(\gamma)^3-5f^{(4)}(\gamma)f'(\gamma)^2}{48f'(\gamma)^3}\right|.
\end{equation}
\end{theorem}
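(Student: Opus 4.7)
The plan is to prove the theorem by writing the iteration as $x_{n+1}=g(x_n)$ with
\[
g(x)=x-\tfrac{1}{2}\cdot\frac{f(x)}{f'(x)}+\frac{3f(x)f'(x)^2}{D(x)},\qquad D(x):=-6f'(x)^3+6f(x)f'(x)f''(x)-2f(x)^2f'''(x),
\]
and then expanding $g(x)-\gamma$ as a Taylor series in $x-\gamma$ around the simple root. Since $f(\gamma)=0$ and $f'(\gamma)\neq 0$, we have $D(\gamma)=-6f'(\gamma)^3\neq 0$, so there is some $\delta_0>0$ on which $g$ is well-defined and five-times differentiable, with $g(\gamma)=\gamma$. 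The strategy is to verify that the first three Taylor coefficients of $g$ at $\gamma$ vanish, that the fourth coefficient equals the expression claimed in \eqref{rateofconvergence} (up to sign, absorbed by the absolute value), and then to conclude by a routine local contraction argument.

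To carry out the expansion, I would set $e_n=x_n-\gamma$ and introduce the normalized coefficients $c_k:=f^{(k)}(\gamma)/(k!\,f'(\gamma))$ for $k\ge 2$. Taylor's theorem gives
\begin{align*}
f(x_n)/f'(\gamma)&=e_n+c_2 e_n^2+c_3 e_n^3+c_4 e_n^4+O(e_n^5),\\
f'(x_n)/f'(\gamma)&=1+2c_2 e_n+3c_3 e_n^2+4c_4 e_n^3+O(e_n^4),\\
f''(x_n)/f'(\gamma)&=2c_2+6c_3 e_n+12c_4 e_n^2+O(e_n^3),\\
f'''(x_n)/f'(\gamma)&=6c_3+24c_4 e_n+O(e_n^2).
\end{align*}
From these I would first compute the Newton step $f(x_n)/f'(x_n)=e_n-c_2 e_n^2+2(c_2^2-c_3)e_n^3+O(e_n^4)$ by geometric inversion of the denominator series.

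The main step is to expand the rational correction term. Factoring $-6f'(x_n)^3$ out of $D(x_n)$ and simplifying yields
\[
\frac{3f(x_n)f'(x_n)^2}{D(x_n)}=-\frac{1}{2}\cdot\frac{f(x_n)}{f'(x_n)}\cdot\frac{1}{1-T(x_n)},\qquad T(x_n):=\frac{f(x_n)f''(x_n)}{f'(x_n)^2}-\frac{f(x_n)^2f'''(x_n)}{3f'(x_n)^3},
\]
and, because $T(x_n)=O(e_n)$, the factor $(1-T(x_n))^{-1}$ may be replaced by $1+T(x_n)+T(x_n)^2+T(x_n)^3$ modulo $O(e_n^4)$ for the purpose of fourth-order accuracy. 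Substituting the earlier expansions into $T(x_n)$ and into $f(x_n)/f'(x_n)$, combining with the outer factor $-\tfrac12\,f(x_n)/f'(x_n)$ in the iteration, and adding the explicit $-\tfrac12\,f(x_n)/f'(x_n)$ term, I expect the coefficients of $e_n$, $e_n^2$, and $e_n^3$ to cancel identically, with the coefficient of $e_n^4$ simplifying to a rational expression in $c_2,c_3,c_4$ that rewrites as
\[
\frac{18f'''(\gamma)f''(\gamma)f'(\gamma)-18f''(\gamma)^3-5f^{(4)}(\gamma)f'(\gamma)^2}{48f'(\gamma)^3}.
\]
This algebraic cancellation of the lower-order terms, which only occurs once the Newton correction and the rational correction are combined, is the main obstacle of the argument; it is a symbolic calculation but a delicate one.

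Once this Taylor expansion $g(x)-\gamma=C(x-\gamma)^4+O((x-\gamma)^5)$ is established on $(\gamma-\delta_0,\gamma+\delta_0)$, the convergence claim follows by a standard contraction argument: choose $\delta\in(0,\delta_0]$ so small that $|g(x)-\gamma|\le\tfrac12|x-\gamma|$ on $(\gamma-\delta,\gamma+\delta)$, so any starting point $x_1$ in that interval produces a sequence confined to the interval with $e_n\to 0$. The rate formula \eqref{rateofconvergence} is then immediate from the leading-order identity $e_{n+1}=C\,e_n^4+O(e_n^5)$, since $e_n\to 0$ forces $|e_{n+1}|/|e_n|^4\to|C|$.
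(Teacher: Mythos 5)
Your proposal follows essentially the same route as the paper: Taylor-expand $f,f',f'',f'''$ about the simple root in terms of $c_k=f^{(k)}(\gamma)/(k!\,f'(\gamma))$, derive the error recursion $e_{n+1}=\rho e_n^4+O(e_n^5)$ with $\rho=(9c_2c_3-6c_2^3-5c_4)/2$ (which equals the constant in \eqref{rateofconvergence}), and finish with a local contraction argument; your factorization of the denominator as $-6f'(x_n)^3(1-T(x_n))$ followed by geometric inversion is only a mild reorganization of the paper's direct expansion of that denominator. The one detail to watch is that you must carry the Newton step $f(x_n)/f'(x_n)$ through its $e_n^4$ term (not merely up to $O(e_n^4)$), since the standalone $-\tfrac12 f(x_n)/f'(x_n)$ term contributes that coefficient to $e_{n+1}$.
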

\begin{proof}
Let $n\in \mathbb{N}$. Taylor's theorem provides us with the approximations 
\begin{equation} \label{taylors}
\begin{split}
    f(x_n) = & f'(\gamma)\left(e_n+c_2e_n^2+c_3e_n^3+c_4e_n^4+O(e_n^5)\right),\\
    f'(x_n) = & f'(\gamma)\left(1+2c_2e_n+3c_3e_n^2+4c_4e_n^3+O(e_n^4)\right),\\
    f''(x_n)= & f'(\gamma)\left(2c_2+6c_3e_n+12c_4e_n^2+O(e_n^3)\right),\\
    f'''(x_n)= & f'(\gamma)\left(6c_3+24c_4e_n+O(e_n^2)\right),
    \end{split}
\end{equation}
where 
$$
e_n:=x_n-\gamma \quad \mbox{and} \quad c_k:=\frac{1}{k!}\frac{f^{(k)}(\gamma)}{f'(\gamma)}. 
$$
Dividing the first two equations in \eqref{taylors}, we get
\begin{equation}\label{eq:3.6}
    \frac{f(x_n)}{f'(x_n)}=e_n - c_2{e_n^2} + 2(c_2^2 - c_3)e_n^3+(7c_2c_3 - 4c_2^2 - 3c_4)e_n^4+O(e_n^5)
\end{equation}
after a short calculation.
We further write \eqref{SJM} in the form
\begin{equation}\label{eq:3.7}
     x_{n+1} = x_n - \frac{1}{2}\frac{f(x_n)}{f'(x_n)} + \frac{3f(x_n)}{-6f'(x_n)+6\cdot \frac{f(x_n)}{f'(x_n)}\cdot f''(x_n) - 2\left(\frac{f(x_n)}{f'(x_n)}\right)^2f'''(x_n)}.
\end{equation}
Combining \eqref{taylors} and \eqref{eq:3.6}, we approximate the denominator in the last term on the right-hand side of \eqref{eq:3.7} by 
\begin{equation}\label{eq:3.8}
    \begin{split}
        &-6f'(x_n)+6\left(\frac{f(x_n)}{f'(x_n)}\right)f''(x_n) - 2\left(\frac{f(x_n)}{f'(x_n)}\right)^2f'''(x_n)\\
        = & f'(\gamma)\left((- 24c_3^2-48c_2^3+96c_3c_2^2-12c_4c_2)e_n^4+(24c_2^3 - 36c_3c_2)e_n^3+\right.\\ & \qquad \left. (6c_3-12c_2^2)e_n^2-6)+O(e_n^5)\right).
    \end{split}
\end{equation}
From the first line in \eqref{taylors} and \eqref{eq:3.8}, we deduce that
\begin{equation}\label{eq:3.9}
    \begin{split}
        &  \frac{3f(x_n)}{-6f'(x_n)+6\left(\frac{f(x_n)}{f'(x_n)}\right)f''(x_n) - 2\left(\frac{f(x_n)}{f'(x_n)}\right)^2f'''(x_n)}\\
        &= -\frac{e_n}{2}-\frac{c_2}{2}e_n^2+(c_2^2-c_3)e_n^3+\frac{c_4 + 2c_2^3 - 5c_3c_2}{2}\cdot e_n^4 + O(e_n^5),
    \end{split}
\end{equation}
which may be calculated using a suitable computer algebra system.
Now plugging \eqref{eq:3.6} and \eqref{eq:3.9} into \eqref{eq:3.7} and simplifying, we arrive at
\begin{equation*}\label{eq:2.10}
    \begin{split}
     x_{n+1}& =\gamma+e_n-\frac{1}{2}(e_n-c_2e_n^2+2(c_2^2-c_3)e_n^3+(7c_3c_2-4c_2^3-3c_4)e_n^4)+\\
     &\quad \left(-\frac{e_n}{2}-\frac{c_2}{2}e_n^2+(c_2^2-c_3)e_n^3+\frac{c_4 + 2c_2^3 - 5c_3c_2}{2}\cdot e_n^4 + O(e_n^5)\right)\\
     &=\gamma+\frac{9c_3c_2-6c_2^3-5c_4}{2}\cdot e_n^4 + O(e_n^5),
     \end{split}
     \end{equation*}
     and hence
     \begin{equation} \label{en+1}
     e_{n+1}=\rho e_n^4 + O(e_n^5)
\end{equation}
with 
\begin{equation} \label{rhodef}
\rho:=\frac{9c_3c_2-6c_2^3-5c_4}{2}.
\end{equation}
Therefore, if $x_1$ is sufficiently close to $\gamma$, then the sequence $(x_n)$ converges to $\gamma$ with order 4, and we have 
\begin{equation*}
    \lim_{n \rightarrow \infty} \frac{\norm{e_{n+1}}}{\norm{e_n}^4} = |\rho|,
\end{equation*}
which implies \eqref{rateofconvergence}. This completes the proof.
\end{proof}
\section{$p$-adic analogue of SJM}
Now we apply SJM to polynomials in $\mathbb{Z}_p[x]$ subject to certain initial conditions which we work out in detail. We shall prove the following theorem, which is our main result.

\begin{theorem}\label{thm:3.1}
  Let $p>3$ be a prime and $f(x) \in \intz_p[x]$ be a polynomial with leading coefficent a unit in $\mathbb{Z}_p$. 
  Suppose that $x_1 \in \intz_p$ satisfies the condition 
  \begin{equation} \label{ourcondi}
  |f(x_1)|_p<|f'(x_1)|_p^2.
  \end{equation}
  (In particular, $f'(x_1)\not=0$ and $|f(x_1)|_p<1$ and thus $f(x_1)\equiv 0 \bmod{p}$.) Then the sequence $(x_n)$ defined iteratively as
  \begin{equation*}
      x_{n+1} = x_n - \frac{1}{2}\frac{f(x_n)}{f'(x_n)} + \frac{3f(x_n)f'(x_n)^2}{-6f'(x_n)^3 + 6f(x_n)f'(x_n)f''(x_n) - 2f(x_n)^2f'''(x_n)}
  \end{equation*}
  converges to a simple root $\gamma \in \intz_p$ of $f$ with order of convergence 4.
\end{theorem}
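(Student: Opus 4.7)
The plan is to mirror the proof of Theorem~\ref{thm:sjm-order}, transferring the Taylor-series calculation to the $p$-adic setting. The hypothesis \eqref{ourcondi} is exactly the generalized Hensel condition \eqref{condi}, so the generalized Hensel's lemma produces a unique $\gamma\in\intz_p$ with $f(\gamma)=0$ and $|x_1-\gamma|_p\leq|f(x_1)|_p/|f'(x_1)|_p<|f'(x_1)|_p$. A Taylor expansion of $f'$ around $\gamma$ then forces $|f'(\gamma)|_p=|f'(x_1)|_p$, so $\gamma$ is a simple root; this will be the candidate limit of the iteration.

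Next, I would prove by induction that the iteration stays in $\intz_p$ and preserves both invariants $|f(x_n)|_p<|f'(x_n)|_p^2$ and $|f'(x_n)|_p=|f'(x_1)|_p$. Because $p>3$, the integers $2$ and $6$ are units in $\intz_p$, so in the denominator
\[
D(x_n):=-6f'(x_n)^3+6f(x_n)f'(x_n)f''(x_n)-2f(x_n)^2f'''(x_n),
\]
the leading term has absolute value exactly $|f'(x_n)|_p^3$, while the two correction terms are strictly smaller by the invariant. The ultrametric inequality then gives $|D(x_n)|_p=|f'(x_n)|_p^3$, so $x_{n+1}$ is well defined; the bound $|x_{n+1}-x_n|_p<|f'(x_n)|_p\leq 1$ places $x_{n+1}$ in $\intz_p$, and a Taylor expansion of $f'$ around $x_n$ preserves $|f'(x_{n+1})|_p=|f'(x_n)|_p$.

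The heart of the argument is to replicate the algebraic computations \eqref{eq:3.6}--\eqref{en+1} in $\mathbb{Q}_p$. Since $f$ is a polynomial, the Taylor expansions \eqref{taylors} around $\gamma$ are exact finite sums; the coefficients $c_k=f^{(k)}(\gamma)/(k!f'(\gamma))$ are well defined because the factorials $2!,3!,4!$ are units in $\intz_p$ under $p>3$, and they satisfy $|c_k|_p\leq|f'(\gamma)|_p^{-1}$ (the numerators $f^{(k)}(\gamma)/k!$ lie in $\intz_p$). Every division in the real-case proof is by either such a factorial or a factor $f'(x_n)$, all $p$-adic units under the invariant, so the same manipulations yield the exact identity $e_{n+1}=\rho e_n^4+e_n^5R(e_n)$, with $\rho$ as in \eqref{rhodef} and $R$ a rational function whose $p$-adic absolute value is uniformly bounded for $|e_n|_p\leq 1$. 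This gives $|e_{n+1}|_p\leq M|e_n|_p^4$ for an explicit $M$, and in the limit $|e_{n+1}|_p/|e_n|_p^4\to|\rho|_p$, proving order-$4$ convergence.

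The main obstacle is this last step: the formal $O(e_n^5)$-tail of the real-analytic proof must be lifted to a rigorous $p$-adic estimate, with every division audited for $p$-adic invertibility. A cleaner route, which I would adopt if the direct translation becomes unwieldy, is to Taylor-expand $f(x_{n+1})$ directly in the step $h_n:=x_{n+1}-x_n$, substitute the SJM formula for $h_n$ in terms of $u:=f(x_n)/f'(x_n)$, and verify by direct computation that the coefficients of $u,u^2,u^3$ in the resulting expansion of $f(x_{n+1})$ cancel. This collapses the proof to the single bound $|f(x_{n+1})|_p\leq|f(x_n)|_p^4/|f'(x_n)|_p^6$, equivalently $t_{n+1}\leq t_n^4$ for $t_n:=|f(x_n)|_p/|f'(x_n)|_p^2$, from which order-$4$ convergence follows in one stroke by iterating $t_n\leq t_1^{4^{n-1}}\to 0$ and using $|x_n-\gamma|_p\leq t_n|f'(x_1)|_p$.
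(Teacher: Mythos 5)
Your proposal is correct, and your ``cleaner route'' at the end is in fact exactly what the paper does: the authors expand $f(x_{n+1})$ in the step $y_n=x_{n+1}-x_n$, verify (by computer algebra) that the terms through order three cancel, and obtain precisely the bound $|f(x_{n+1})|_p\le |f(x_n)|_p^4/|f'(x_n)|_p^6$, i.e.\ $t_{n+1}\le t_n^4$, as the inductive step \eqref{condition:3}; your observation that $|D(x_n)|_p=|f'(x_n)|_p^3$ by the ultrametric inequality is the content of their Lemma \ref{lemma:bound}. The one organizational difference is that you invoke the Generalized Hensel Lemma (Theorem \ref{thm:GHL}) up front to furnish $\gamma$ and then measure $e_n=x_n-\gamma$, whereas the paper deliberately avoids this: it constructs $\gamma$ as the limit of the Cauchy sequence produced by the induction (thereby giving an independent SJM-based proof of the Hensel-type lifting), and only afterwards appeals to the Taylor expansion around $\gamma$ --- your ``primary'' route --- to pin down the exact limit $|e_{n+1}|_p/|e_n|_p^4\to|\rho|_p$. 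Your version is slightly less self-contained but logically sound; the only point you should make explicit is the lower bound $|f(x_n)|_p=|f'(\gamma)|_p\,|e_n|_p$ for $n$ large (from $f(x_n)=f'(\gamma)e_n+O(e_n^2)$ and $f'(\gamma)\neq 0$), which is needed to convert the upper bound $t_{n+1}\le t_n^4$ into a bounded ratio $|e_{n+1}|_p/|e_n|_p^4$ rather than merely rapid decay.
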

As noted in the introduction, in \cite{j.rabago}, a $p$-adic analogue of Olver's method was worked out. This method converges with order 3 and was formulated in \cite{j.rabago} under the stronger initial condition that $|f(x_1)|_p<1$ and $|f'(x_1)|_p=1$, which means that $x_1$ is a simple root of the {\it congruence} $f(x)\equiv 0 \bmod{p}$. In contrast, our condition \eqref{ourcondi} allows us to take $x_1$ as a multiple root of the said congruence.
We point out that \eqref{ourcondi} matches the condition in the following theorem which is a version of the Generalized Hensel Lemma (see \cite{p-adic_book}).
 
\begin{theorem}[Generalised Hensel Lemma]\label{thm:GHL}
Let $f(x)\in \intz_p[x]$ and $a\in \intz_p$ satisfying $|f(a)|_p < |f'(a)|_p^2$. Then
there is a unique $\gamma\in\intz_p$ such that $f(\gamma)=0$ and $|\gamma-a|_p<|f'(a)|_p$. Moreover, $|f'(\gamma)|_p=|f'(a)|_p$. 
\end{theorem}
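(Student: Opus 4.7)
The plan is to carry out Newton's iteration starting from $a$ and show that it converges $p$-adically to the required $\gamma$. Set $x_1:=a$ and $x_{n+1}:=x_n-f(x_n)/f'(x_n)$, and write $c:=|f(a)|_p/|f'(a)|_p^2$, so that $c<1$ by hypothesis. I would then prove by induction on $n$ the three invariants: (i) $|f'(x_n)|_p=|f'(a)|_p$, so the division is legitimate; (ii) $|x_{n+1}-x_n|_p\le c^{2^{n-1}}|f'(a)|_p$; (iii) $\varepsilon_n:=|f(x_n)|_p/|f'(x_n)|_p^2\le c^{2^{n-1}}$.

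The engine of the induction consists of the polynomial Taylor identities $f(X+H)=f(X)+f'(X)H+H^2g(X,H)$ and $f'(X+H)=f'(X)+Hh(X,H)$, which are valid with $g,h\in\intz_p[X,H]$ because $f\in\intz_p[x]$ (the relevant divided-difference coefficients are ordinary binomial coefficients). Specializing the first identity to $X=x_n$ and $H=-f(x_n)/f'(x_n)$ collapses the linear term and yields $|f(x_{n+1})|_p\le|f(x_n)|_p^2/|f'(x_n)|_p^2$, which after division by $|f'(a)|_p^2$ (and (i)) squares the ratio in (iii). The second identity with $H=x_{n+1}-x_n$ gives $|f'(x_{n+1})-f'(x_n)|_p\le|x_{n+1}-x_n|_p<|f'(a)|_p$, so the ultrametric inequality pins down $|f'(x_{n+1})|_p=|f'(a)|_p$, securing (i). Invariant (ii) is then immediate from $|x_{n+1}-x_n|_p=\varepsilon_n\cdot|f'(x_n)|_p$ and (iii).

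Since (ii) produces doubly-exponential decay, the sequence $(x_n)$ is Cauchy in the complete space $\intz_p$, and its limit $\gamma$ satisfies $|\gamma-a|_p\le c\cdot|f'(a)|_p<|f'(a)|_p$ by telescoping and ultrametricity. Passing to the limit in (iii) and (i) (and using continuity of $f$ and $f'$) gives $f(\gamma)=0$ and $|f'(\gamma)|_p=|f'(a)|_p$. The main obstacle, and the one point the iteration does not address on its own, is uniqueness. Given two candidates $\gamma_1,\gamma_2$ in the open ball, I would apply $f(Y)-f(X)=(Y-X)f'(X)+(Y-X)^2g(X,Y-X)$ with $X=\gamma_1$, $Y=\gamma_2$. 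Because $|\gamma_2-\gamma_1|_p<|f'(a)|_p=|f'(\gamma_1)|_p$, the linear term strictly dominates the quadratic one in $p$-adic absolute value, so the right-hand side has absolute value exactly $|\gamma_2-\gamma_1|_p\cdot|f'(a)|_p$; since the left-hand side vanishes, this forces $\gamma_1=\gamma_2$. The delicate bookkeeping point throughout is to keep the strict inequalities strict, so that the ultrametric equality cases do not fail at any step.
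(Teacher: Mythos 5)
Your proposal is correct and is essentially the approach the paper itself points to: the paper does not prove Theorem \ref{thm:GHL} but refers to the standard Newton-iteration argument (citing Conrad), and your three invariants mirror exactly the induction the authors run for their main theorem with SJM in place of Newton's method. The one point worth making explicit is that $x_n\in\intz_p$ (immediate from $|x_{n+1}-x_n|_p\le 1$), which is needed so that $g(x_n,H)\in\intz_p$ in the Taylor identity; otherwise the bookkeeping is complete, including the uniqueness step via domination of the linear term.
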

This shows that a root $a$ of the congruence $f(x)\equiv 0\bmod{p}$ can be lifted uniquely to a simple root $\gamma$ of $f$ in $\mathbb{Z}_p$, provided that $|f(a)|_p<|f'(a)|_p^2$.  Hence, if $x_1=a$ satisfies this condition, then the SJM iteration gives a sequence which converges to precisely this unique lifting $\gamma$ of $a$. 

Theorem \ref{thm:GHL} can be proved using Newton's method (see \cite{k.conrad}). In our proof of Theorem \ref{thm:3.1}, we replace Newton's method by SJM.  
\section{Preliminaries}
For the proof of Theorem \ref{thm:3.1}, we will use the lemma below.
\begin{lemma}\label{lemma:bound}
Let $p>3$ be a prime, $f(x) \in \intz_p[x]$  and $A\in \mathbb{Z}_p$ such that $f'(A)\not=0$. Suppose that 
\begin{equation}\label{assumption}
\left|\frac{f(A)}{f'(A)^2}\right|_p<1.
\end{equation} 
Define
\begin{equation*}
    B:=-\frac{1}{2}\frac{f(A)}{f'(A)}+\frac{3f(A)}{f'(A)-3M},
\end{equation*}
where 
$$
M:= f'(A)-\frac{2}{3}\cdot \frac{f(A)}{f'(A)}\cdot f''(A)+\frac{2}{9}\left(\frac{f(A)}{f'(A)}\right)^2f'''(A).
$$
Then we have 
$$
|B|_p\le \left|\frac{f(A)}{f'(A)}\right|_p.
$$
\end{lemma}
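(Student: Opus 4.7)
My plan is to first simplify the denominator $f'(A) - 3M$ appearing in $B$ and identify it as $-2f'(A)$ times a factor $E$ that is close to $1$ in the $p$-adic sense, and then bound $B$ using the ultrametric inequality together with the fact that $2$ and $3$ are units in $\intz_p$. Substituting the definition of $M$ and factoring $-2f'(A)$ from each term, a direct calculation gives
\begin{equation*}
f'(A) - 3M \;=\; -2f'(A)\left[\,1 - \frac{f(A)\,f''(A)}{f'(A)^2} + \frac{f(A)^2\,f'''(A)}{3\,f'(A)^3}\,\right].
\end{equation*}
Writing $E$ for the bracketed quantity, this recasts $B$ as
\begin{equation*}
B \;=\; -\frac{f(A)}{2f'(A)} \;-\; \frac{3f(A)}{2f'(A)\,E} \;=\; -\frac{f(A)}{2f'(A)}\cdot\frac{E+3}{E}.
\end{equation*}

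The central step is to show that $E$ is a $p$-adic unit. Set $t:=f(A)/f'(A)^2$, so $|t|_p<1$ by assumption \eqref{assumption}. Since $f\in\intz_p[x]$ and $A\in\intz_p$, every derivative $f^{(k)}(A)$ lies in $\intz_p$, so in particular $|f'(A)|_p,|f''(A)|_p,|f'''(A)|_p\le 1$. The second term inside the bracket is $t\,f''(A)$, of $p$-adic norm at most $|t|_p<1$. The third term equals $\tfrac{1}{3}\,t^{2}\,f'(A)\,f'''(A)$, whose norm is at most $|t|_p^{2}<1$, the factor $\tfrac{1}{3}$ being a $p$-adic unit precisely because $p>3$. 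Consequently $|E-1|_p<1$, and the ultrametric inequality forces $|E|_p=1$.

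Since $p>3$ also gives $|2|_p=|3|_p=1$, the ultrametric inequality yields $|E+3|_p\le\max(|E|_p,|3|_p)=1$. Combining these estimates,
\begin{equation*}
|B|_p \;=\; \frac{|f(A)|_p}{|2|_p\,|f'(A)|_p}\cdot\frac{|E+3|_p}{|E|_p} \;\le\; \left|\frac{f(A)}{f'(A)}\right|_p,
\end{equation*}
which is the desired bound.

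The main obstacle is pinning down the algebraic identity $f'(A)-3M=-2f'(A)\,E$ in a clean form; once this factorisation is in hand everything else reduces to applications of the ultrametric inequality and the observation that $2$, $3$ are units in $\intz_p$. It is worth emphasising that the hypothesis $p>3$ is genuinely used: both the coefficients $2,3,9$ hidden inside $M$ and the coefficient $3$ appearing in the numerator $3f(A)$ must be $p$-adic units for the control of $E$ and of $|E+3|_p$ to go through.
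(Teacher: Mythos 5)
Your proposal is correct and follows essentially the same route as the paper: the paper writes $B=\frac{f(A)}{f'(A)}\bigl(-\frac{1}{2}+\frac{3}{1-3M/f'(A)}\bigr)$ and shows $\bigl|1-\frac{3M}{f'(A)}\bigr|_p=|-2|_p=1$ using \eqref{assumption} and $p>3$, which is exactly your factorisation $f'(A)-3M=-2f'(A)E$ with $|E|_p=1$. The remaining ultrametric estimates coincide, so there is nothing to add.
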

\begin{proof}
We write
\begin{equation}\label{write}
B=\frac{f(A)}{f'(A)}\left(-\frac{1}{2}+\frac{3}{1-3M/f'(A)}\right)
\end{equation}
and 
$$
1-\frac{3M}{f'(A)}=-2+2\cdot \frac{f(A)}{f'(A)^2}\left(f''(A)-\frac{1}{3}\cdot \frac{f(A)f'''(A)}{f'(A)}\right).
$$
From our assumption \eqref{assumption}, it follows that 
$$
\left|1-\frac{3M}{f'(A)}\right|_p=|-2|_p=1
$$
if $p>3$. Hence,
$$
\left|-\frac{1}{2}+\frac{3}{1-3M/f'(A)}\right|_p\le 1
$$
if $p>3$, which together with \eqref{write} implies the claim. 
\end{proof}

\section{Proof of the main result}
Now we turn to the \\ \\
\textbf{Proof of Theorem \ref{thm:3.1}.}
We set
$$
t:=\norm{\frac{f(x_1)}{f'(x_1)^2}}_p
$$
and recall our assumption $|t|_p<1$.
We shall proceed similarly as in the proof of the Generalized Hensel Lemma in \cite{k.conrad} and aim to show the following statements by induction over $n$:
\begin{enumerate}
    \item $\norm{x_n}_p \leq 1$, \label{condition:1}
    \item $\norm{f'(x_n)}_p = \norm{f'(x_1)}_p$, \label{condition:2}
    \item $\norm{f(x_n)}_p\leq \norm{f'(x_1)}_p^2t^{4^{n-1}}$. \label{condition:3}
\end{enumerate}
For $n=1$, these conditions are clearly true.
So let us assume the conditions  to be true for $n$. Now for $n+1$, we have
\begin{equation} \label{appeal}
\norm{x_{n+1}}_p=\norm{x_n+y_n}_p \leq \max\{\norm{x_n}_p,\norm{y_n}_p\},
\end{equation}
where $y_n:=x_{n+1}-x_n$. From Lemma \ref{lemma:bound} we know that 
\begin{equation} \label{important}
\norm{y_n}_p \leq \norm{\frac{f(x_n)}{f'(x_n)}}_p,
\end{equation} where we use statement \eqref{condition:3} together with our assumption $t<1$ above. Further, using \eqref{condition:2}, we deduce that $\norm{y_n}_p \leq \norm{{f(x_n)}/{f'(x_1)}}_p$, and from \eqref{condition:3}, we then get 
\begin{equation} \label{ynineq}
\norm{y_n}_p \leq \norm{\frac{f(x_n)}{f'(x_1)}}_p\leq \norm{f'(x_1)}_pt^{4^{n-1}}\leq 1.
\end{equation}
Finally, from \eqref{condition:1}, we know that $\norm{x_n}_p\leq 1$ and hence we have $\norm{x_{n+1}}_p\leq 1$ by appealing to \eqref{appeal}. Thus, we have established \eqref{condition:1} for $n+1$ in place of $n$.
To prove \eqref{condition:2} for $n+1$, we use the fact that if $F\in \mathbb{Z}_p[x]$, then 
\begin{equation}
 F(x)-F(y)=(x-y)G(x,y)
\end{equation}
for some polynomial $G\in \mathbb{Z}_p[x,y]$ and hence
$$
|F(x)-F(y)|_p\le |x-y|_p
$$
for any $x,y\in \mathbb{Z}_p$. Applying this to $F=f'$, we have
\begin{equation*}
\begin{split}
    \norm{f'(x_{n+1})-f'(x_n)}_p \leq & \norm{x_{n+1}-x_n}_p = \norm{y_n}_p \leq \norm{\frac{f(x_n)}{f'(x_n)}}_p \\ \leq & \norm{f'(x_1)}_pt^{4^{n-1}} < \norm{f'(x_1)}_p,
    \end{split}
\end{equation*}
where we use \eqref{important} and \eqref{condition:3}. 
Now if $\norm{f'(x_{n+1})}_p \not= \norm{f'(x_n)}_p$, then we have
\begin{equation}
  \norm{f'(x_{n+1})-f'(x_n)}_p = \max \{\norm{f'(x_{n+1})}_p,\norm{f'(x_n)}_p\} \geq  \norm{f'(x_n)}_p = \norm{f'(x_1)}_p.  
\end{equation}
Hence we arrive at a contradiction. So $\norm{f'(x_{n+1})}_p = \norm{f'(x_n)}_p = \norm{f'(x_1)}_p$.

To prove \eqref{condition:3} for $n+1$, we use Taylor's formula and recall that $p>3$, obtaining
\begin{equation}
f(x_{n+1})=f(x_n+y_n)=f(x_n)+f'(x_n)y_n+\frac{f''(x_n)}{2!}y_n^2+\frac{f'''(x_n)}{3!}y_n^3+zy_n^4,
\end{equation}
for some $z\in \intz_p$. 
\begin{equation}\label{eq:4.16}
f(x_n)+f'(x_n)y_n+\frac{f''(x_n)}{2!}y_n^2+\frac{f'''(x_n)}{3!}y_n^3 = \frac{f(x_n)^4}{f'(x_n)^3}\frac{A_n}{B_n} = \frac{f(x_n)^4}{f'(x_n)^6}\frac{f'(x_n)^3A_n}{B_n},
\end{equation}
where  
\begin{equation*}
\begin{split} 
A_n:=& a^5d^4-15a^4d^3bc-6a^3d^3b^3+81a^3d^2b^2c^2-189a^2db^3c^3+
18a^2d^2b^4c-\\ & 36ad^2b^6+
 162ab^4c^4+54adb^5c^2-162b^6c^3+108db^7c
\end{split}
\end{equation*} 
and 
$$
B_n:=48(3abc-a^2d-3b^3)^3
$$
with $a:=f(x_n)$, $b:=f'(x_n)$, $c:=f''(x_n)$, $d:=f'''(x_n)$. The above expressions may be calculated conveniently using a suitable computer algebra system.

We note that $|b|_p=|f(x_1)|_p$ by \eqref{condition:2} and $|a|_p< |b|_p^2$ by \eqref{condition:2} and \eqref{condition:3}. It follows that $|A_n|_p\le |b|_p^6$ and 
$|B_n|_p=|b|_p^9$, and hence
\begin{equation*}
\begin{split}
|f(x_{n+1})|_p= &\left|\frac{a^4}{b^3}\cdot \frac{A_n}{B_n}+zy_n^4\right|_p
\leq \max \left\{\left|\frac{a^4}{b^6}\right|_p,|y_n|_p^4\right\}\\
\leq & \left|\frac{a^4}{b^6}\right|_p \cdot \max \left\{1,|b|^2_p\right\}
\leq  \left|\frac{a^4}{b^6}\right|_p \leq |f'(x_1)|_p^2t^{4^n},
\end{split}
\end{equation*}
where for the second inequality above, we use \eqref{important} again, and for the last inequality, we use \eqref{condition:2} and \eqref{condition:3}.
Hence we are done with the induction. 

Using \eqref{ynineq} and $t<1$, it is clear that $(x_n)$ is a Cauchy sequence in $\mathbb{Z}_p$ and hence converges to some $p$-adic integer $\gamma$. 
Further, from  \eqref{condition:2} and \eqref{condition:3}, we get $|f'(\gamma)|_p = |f'(x_1)|_p>0$ and $f(\gamma)=0$. 
Finally, using arguments parallel to those in the proof of \eqref{en+1}, we see that
$$
\left|e_{n+1} - \rho e_n^4\right|_p = O(|e_n|_p^5),
$$ 
where $e_n:=x_n-\gamma$ and $\rho$ as defined in \eqref{rhodef}. Thus we get
\begin{equation}
    \lim_{n \rightarrow \infty} \frac{\norm{e_{n+1}}_p}{\norm{e_n}_p^4} = \norm{\rho}_p
\end{equation}
and hence, $(x_n)$ converges with order 4.
\pushQED{\qed}\qedhere 

\section{Brief concluding remarks}
Two questions remain to be answered.\\ 

(A) Can every zero in $\mathbb{Z}_p$ of $f$ be approximated using SJM, provided we choose $x_1$ suitably?\\

(B) What happens if the initial condition \eqref{ourcondi} on $x_1$ in Theorem \ref{thm:3.1} is not satisfied? Is there a method which produces a suitable $x_1$ satisfying this condition?\\

Regarding question (A), we note that SJM only approximates {\it simple} roots of $f$. If $\gamma$ is a simple root, then clearly there is $x_1$ satisfying the initial condition in Theorem \ref{thm:3.1}, giving rise to a sequence $(x_n)$ converging to $\gamma$: In particular, we may just take $x_1:=\gamma$. If $f$ has multiple roots, then it is easy to produce a polynomial $\tilde{f}$ whose roots are all simple and coincide with the roots of $f$: Take $\tilde{f}=f/\mbox{gcd}(f,f')$, where $\mbox{gcd}(f,f')$ is a greatest common divisor (unique up to multiplication by a unit in $\mathbb{Z}_p$) of $f$ and $f'$ in $\mathbb{Z}_p[x]$. Such a greatest common divisor can be conveniently calculated using the Euclidean algorithm in $\mathbb{Q}_p[x]$. 

To address question (B), we may employ Thurston's method. Here we only describe briefly how this method works. For the details, we refer the interested reader to \cite{thurston}.  Let
\begin{equation}
    \gamma = a_0 + a_1p + a_2p^2 + \cdots
\end{equation}
be a root of $f$ in $\mathbb{Z}_p$. In particular, $f(a_0)\equiv 0 \bmod{p}$. Let 
$$
\gamma_i=a_i+a_{i+1}p+a_{i+2}p^2+ \cdots \mbox{ for } i\in \mathbb{N}_0.
$$ 
Now Thurston's method produces a chain of polynomials $F_0,F_1,F_2,F_3,...$, starting from $F_0=f$, where $F_i$ satisfies the equation $F_i(\gamma_i)=0$. In particular, $F_i(a_i)\equiv 0 \bmod{p}$. Hence, we may proceed as follows to solve $f(x)=0$ in $\mathbb{Z}_p$: First, we find $a_0$ such that $F_0(a_0)=f(a_0)\equiv 0\bmod{p}$. Then we produce $F_1$ and find $a_1$ such that $F_1(a_1)\equiv 0\bmod{p}$, if existent. This $a_1$ may not be unique. Then we produce $F_2$ and find (a not necessarily unique) $a_2$ such that $F_2(a_2)\equiv 0 \bmod{p}$, if existent. We continue this process. If we can continue this process ad infinitum, this gives rise to a sequence $(a_0+a_1p+\cdots +a_np^n)$ which converges to a root $\gamma$ in $\mathbb{Z}_p$. Generally, this sequence converges only with order 1, though.

Thurston proved that if $f$ has only simple roots in $\mathbb{Z}_p$, then for every $a_0$ satisfying $f(a_0)\equiv 0 \bmod{p}$, the above process either stops at some point in which case $a_0$ cannot be lifted to a root $\gamma$ of $f$ in $\mathbb{Z}_p$, or, after finitely many steps, we get an index $n$ for which there exists $a_n$ with $F_n(a_n)\equiv 0 \bmod{p}$ and $F_n'(a_n)\not\equiv 0\bmod{p}$. In this case, we are in the situation of Hensel's Lemma, and $a_n$ therefore lifts uniquely to a root $\gamma_n$ of $F_n$. To apply SJM, we only need the weaker condition $|F_n(a_n)|_p<|F_n'(a_n)|_p^2$ from the Generalized Hensel Lemma. Therefore, if $f$ has only simple roots in $\mathbb{Z}_p$, then we can get all roots of $f$ in $\mathbb{Z}_p$ as follows: We apply Thurston's method initially, starting with an $a_0$ satisfying $f(a_0)\equiv 0 \bmod{p}$ and, as soon as we reach the said index $n$, apply the faster SJM with $x_1=a_n$ and $F_n$ in place of $f$. This gives rise to a sequence $(x_n)$ which converges to a root $\gamma_n\in \mathbb{Z}_p$ of $F_n$. From this, we retrieve a root
$$
\gamma=a_0+a_1p+...+a_{n-1}p^{n-1}+\gamma_np^n \in \mathbb{Z}_p
$$ 
of $f(x)$. 

One last brief remark on practical applications of SJM. This method produces a sequence whose elements lie in $\mathbb{Z}_p$. For practical applications, however, only sequences in $\mathbb{Z}$ are useful since we cannot encode an infinite $p$-adic representation using a computer. Therefore, to implement SJM in practice, one needs to cut off the $p$-adic representation of $x_n$ at an appropriate point, getting an integer $x_n'$. Then one calculates $x_{n+1}$ (and hence $x_{n+1}'$) using $x_n'$ in place of $x_n$. This will give a sequence $(x_n')$ of integers converging to our root $\gamma\in \mathbb{Z}_p$ of $f$.     

\nocite{sixth-order-jarratt}
\nocite{p-adic-numbers}
\nocite{ultrametric}
\nocite{reddy}
\nocite{koblitz}
\nocite{Robert}
\nocite{axel}
\bibliography{ref}

\begin{thebibliography}{10}

\bibitem{initial-sjm}
I.K. Argyros, Dong Chen, and Qingshan Qian.
\newblock The jarratt method in banach space setting.
\newblock {\em Journal of Computational and Applied Mathematics},
  51(1):103--106, May 1994.

\bibitem{k.conrad}
Keith Conrad.
\newblock Hensel's lemma.
\newblock {\em
  \href{https://kconrad.math.uconn.edu/blurbs/gradnumthy/hensel.pdf}{Hensel's
  Lemma by K.Conrad}}.

\bibitem{halley-method}
Walter Gander.
\newblock On halley's iteration method.
\newblock {\em The American Mathematical Monthly}, 92(2):131--134, 1985.

\bibitem{p-adic-numbers}
Fernando~Q. Gouv{\^{e}}a.
\newblock {\em p-adic Numbers}.
\newblock Springer International Publishing, 2020.

\bibitem{jarratt}
P.~Jarratt.
\newblock Some fourth order multipoint iterative methods for solving equations.
\newblock {\em Mathematics of Computation}, 20(95):434--437, 1966.

\bibitem{p-adic_book}
Svetlana Katok.
\newblock {\em p-adic analysis compared with real}.
\newblock American Mathematical Society Mathematics Advanced Study Semesters,
  Providence, R.I. University Park, PA, 2007.

\bibitem{koblitz}
Neal Koblitz.
\newblock {\em p-adic Numbers, p-adic Analysis, and Zeta-Functions}.
\newblock Springer New York, 1984.

\bibitem{olver-method}
F.~W.~J. Olver.
\newblock The evaluation of zeros of high-degree polynomials.
\newblock {\em Philosophical Transactions of the Royal Society of London.
  Series A, Mathematical and Physical Sciences}, 244(885):385--415, 1952.

\bibitem{j.rabago}
Julius~Fergy Rabago.
\newblock Olver’s method for solving roots of p-adic polynomial equations.
\newblock {\em Italian Journal of Pure and Applied Mathematics}, 36:739, 08
  2016.

\bibitem{reddy}
B.~Surender Reddy.
\newblock On equivalence of p-adic 2-norms in p-adic linear 2-normed spaces.
\newblock 2010.

\bibitem{Robert}
Alain~M. Robert.
\newblock {\em A Course in p-adic Analysis}.
\newblock Springer New York, 2000.

\bibitem{ultrametric}
W.~H. Schikhof.
\newblock {\em Ultrametric Calculus: An Introduction to p-Adic Analysis}.
\newblock Cambridge Studies in Advanced Mathematics. Cambridge University
  Press, 1985.

\bibitem{thurston}
H.~S. Thurston.
\newblock The solution of p-adic equations.
\newblock {\em The American Mathematical Monthly}, 50(3):142--148, 1943.

\bibitem{axel}
Axel G.~R. Turnquist.
\newblock p-adic numbers and solving p-adic equations.
\newblock 2012.

\bibitem{sixth-order-jarratt}
Xiuhua Wang, Jisheng Kou, and Yitian Li.
\newblock Modified jarratt method with sixth-order convergence.
\newblock {\em Applied Mathematics Letters}, 22(12):1798--1802, 2009.

\end{thebibliography}
\bibliographystyle{plain}
\end{document}